\title[Completeness of trajectories]{Completeness of trajectories associated to Appell hypergeometric functions}
\author[L. Boulton]{Lyonell Boulton$^\beta$} 
\address{$^\beta$Department of Mathematics and Maxwell Institute for Mathematical Sciences, Heriot-Watt University, Edinburgh, EH14 4AS.}
\email{L.Boulton@hw.ac.uk}
\date{12th April 2023}
\newcommand{\sgn}{\operatorname{sgn}}
\newcommand{\sn}{\operatorname{sn}}
\newtheorem{Lemma}{Lemma}
\newtheorem{Theorem}{Theorem}
\newtheorem{Corollary}{Corollary}
\keywords{Non-linear eigenvalue problems, bases properties of eigenfunctions, Apell hypergeometric functions.}
\subjclass[2020]{Primary: 34L10; Secondary 34C25, 34B15}
\begin{document}
\maketitle

\begin{abstract}
We examine the linear completeness of trajectories of eigenfunctions associated to non-linear eigenvalue problems, subject to Dirichlet boundary conditions on a segment. We pursue two specific goals. On the one hand, we establish that linear completeness persists for the non-linear Schr{\"o}dinger equation, even when the trajectories lie far from those of the linear equation where bifurcations occur. On the other hand, we show that this is also the case for a fully non-linear version of this equation which is naturally associated with Appell hypergeometric functions.  Both models shed new light on a framework for completeness in the non-linear setting, considered by L.E.~Fraenkel over 40 years ago, that may have significant potential but which does not seem to have received much attention. 
\end{abstract}

\newpage

\section{Introduction}
In two consecutive papers \cite{F1980a,F1980b} published at the end of the 1970s which appear to have been largely overlooked, L.E.~Fraenkel considered the questions of linear and non-linear completeness for a family of trajectories on a Hilbert space. Taking as one of two models for his investigations\footnote{The other model was the eigenvalue problem with a plus rather than a minus sign in front of the non-linearity.} the semi-linear eigenvalue problem
\begin{equation} \label{nl-schrod}
\begin{aligned} 
&u''-u^3+\lambda u=0 \\
&u(0)=u(1)=0,
\end{aligned}
\end{equation}
he established among several other remarkable results, that a collection of eigenfunctions $\{u_n\}_{n=1}^\infty\equiv \{u_n\}$ of \eqref{nl-schrod} is linearly complete in $L^2(0,1)\equiv L^2$, when subject to a control on the growth of the norms of the $u_n$.  

In terms of the Jacobi elliptic functions, $\sn(y,\mu)$, this family is given explicitly by 
\begin{equation} \label{nl-schrod_efu}
u(x)=u_n(x,\mu)=2^{\frac32}n \mu K(\mu) \sn(2K(\mu)nx,\mu) 
\end{equation} with associated eigenvalues \[ \lambda=\lambda_n=4n^2(1+\mu^2)K(\mu)^2.\] 
Here the modulus $\mu$ lies in $(0,1)$ and it is a free parameter, while the 1/4 period $K(\mu)$ is the complete elliptic integral. According to \cite[Theorem~3.3(i)]{F1980a}, if $\{\mu_n\}_{n=1}^\infty\subset (0,1)$  is a sequence for which 
\begin{equation} \label{growthEF}
    \{\|u_n(\cdot,\mu_n)\|/n\}\in \ell^4(\mathbb{N}),
\end{equation}
then $\{u_n\}$ is a basis of $L^2$. 

A key component in the proof of this result, is the fact that \eqref{growthEF} is equivalent to the condition
\[
      \sum_{n=1}^\infty \left\|\frac{u_n(\cdot,\mu_n)}{n \gamma_n}-e_n\right\|^2<\infty
\]
for $e_n(x)=2^{\frac12} \sin(n\pi x)$ and $\gamma_n$ the $n$-th sine Fourier coefficient of $u_n/n$. 
This is known to be sufficient, but not necessary, for $\{u_n\}$ to become a basis of $L^2$. Moreover, although it allows $\|u_n(\cdot,\mu_n)\|$ to be $O(n^c)$ as $n\to\infty$ for all $c<3/4$, \eqref{growthEF} holds if and only if $\{\mu_n\}\in \ell^4(\mathbb{N})$. Therefore, it requires $\mu\to 0$. In this regime, the equation \eqref{nl-schrod} bifurcates from the linear eigenvalue equation and it is natural to expect that $\{u_n\}$ is close enough to an orthonormal basis of $L^2$. 
   
The present paper is devoted to two specific goals in the context of Fraenkel's original idea of asking questions about linear and non-linear completeness for trajectories of eigenfunctions. On the one hand, we show that  $\{u_n\}$ is also a basis for any $\{\mu_n\}\subset (0,1)$ such that $\limsup \mu_n \leq \mu_0$, where $\mu_0<1$ is a constant very close to 1. Concretely, we show that $\{u_n/n\}$ is a Riesz basis in the sense that it is equivalent to the orthonormal basis $\{e_n\}$. On the other hand, we examine linear completeness on a fully non-linear version of \eqref{nl-schrod}. The eigenvalue equation {\small \begin{equation} \label{p-nl-schrod}
\begin{aligned} 
&(\sgn(\phi')|\phi'|^{p-1})'-(p-1)\sgn(\phi)|\phi|^{2p-1}+\lambda (p-1)\sgn( \phi)|\phi|^{p-1}=0 \\
&\phi(0)=\phi(1)=0,
\end{aligned}
\end{equation}}for $p>1$, in which we have replaced the Laplacian term by the non-linear $p$-Laplacian. 

This new equation is neither artificial nor it is a purposeless generalisation of \eqref{nl-schrod}. Rather, it provides a natural link between the framework of the papers \cite{F1980a,F1980b} and the one developed recently for the $p$-Laplacian and other families of dilated periodic functions arising in the context of Sobolev embeddings on the segment. A systematic treatment of the latter can be found in  the book \cite{EL2011} and the state-of-the-art on the basis question for the $p$-Laplacian and other related non-linear operators can be found in the papers \cite{BE2012} and \cite{BL2015}. For a full list of updated references on the subject and a more general perspective, see also the paper \cite{BM2018}. 

We thank Domenic Petzinna for his useful comments and attentive reading of an earlier version of this manuscript, and Shingo Takeuchi for telling us about the work \cite{ST2012,ST2014}. The background to the concepts and terminology employed below can be found in  the books \cite{H2011} and \cite{WW1920}.

\section{The eigenvalue equation}

Let $p>1$. We examine families of solutions to \eqref{p-nl-schrod}. The case $p=2$ corresponds to \eqref{nl-schrod}. Our interest is completeness properties near and far from bifurcation phenomena associated to the $p$-Laplacian eigenvalue equation 
\begin{equation} \label{p-schrod}
\begin{aligned} 
&(\sgn(\phi')|\phi'|^{p-1})'+\lambda (p-1)\sgn( \phi)|\phi|^{p-1}=0 \\
&\phi(0)=\phi(1)=0.
\end{aligned}
\end{equation}
We know that the eigenfunctions of \eqref{p-schrod} become a Riesz basis of $L^2$ for all $p>p_0$, where $p_0>1$ is a constant close to 1. Specific characterisations of $p_0$ can be found in \cite[Theorem~4.5]{BE2012} and \cite[Theorem~6.5]{BL2015}.

Solutions to \eqref{p-nl-schrod} are given in terms of inverse Appell hypergeometric functions, as follows. For (general) modulus $\mu\in(0,1)$, let 
\[
    K_p(\mu)=\int_0^1 (1-s^p)^{-\frac{1}{p}}(1-\mu^p s^p)^{-\frac{1}{p}}\, \mathrm{d}s
\]
and let $\sn_p(y,\mu)$ be the odd $4K_p(\mu)$-periodic continuous extension of the inverse function of
\[
      w_p(z)=\int_{0}^z (1-s^p)^{-\frac{1}{p}}(1-\mu^p s^p)^{-\frac{1}{p}}\, \mathrm{d}s.
\]
Then, $\sn_p(y,\mu)$ is positive and increasing for $y\in\big(0,K_p(\mu)\big)$. The re-scaled function $\sn_p(2K_p(\mu)x,\mu)$ is 2-periodic, odd and differentiable. It is positive on $(0,1)$ with maximum value equal to 1 at $x=\frac12$. It is also even with respect to that point. The notation we employ here is consistent with that of the paper \cite{ST2012}. 

A full description of the eigenvalues and eigenfunctions for non-linear equations such as \eqref{p-nl-schrod}, and an analysis of the bifurcation phenomenona around those of \eqref{p-schrod}, was established in the paper \cite{GV1988}. See also \cite{ST2012}. If $1<p\leq 2$, a full set of eigenfunctions of \eqref{p-nl-schrod} is given by dilations of $\sn_p(2K_p(\mu)x,\mu)$, for suitable $\lambda>0$. By contrast, for $p>2$, a full set of eigenfunctions for large $\lambda$ also includes the possibility of sub-segments of $(0,1)$ where $\phi(x)$ is  constant. The crucial distinction between the two cases is the fact that $\lim_{\mu\to 1}K_p(\mu)= \infty$ if and only if $1<p\leq 2$. 

In the present manuscript we will only consider trajectories of solutions which are not locally constant, as described in the following theorem. 

\begin{Theorem} \label{Theorem1}
Let $p>1$. A differentiable function $\phi:\mathbb{R}\longrightarrow \mathbb{R}$, such that $\phi$ is not constant on any open segment, satisfies the equation \eqref{p-nl-schrod} for some $\lambda>0$ if and only if the following holds true. For a unique pair $(\mu,n)\in (0,1)\times \mathbb{N}$,
\[\phi(x)=\pm 2^{\frac{p+1}{p}}n \mu K_p(\mu) \sn_p(2nK_p(\mu)x,\mu) \] and \[ \lambda=2^{p}n^p(1+\mu^p)K_p(\mu)^p.\]
\end{Theorem}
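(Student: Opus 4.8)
\emph{Plan.} I would prove the equivalence by a phase-plane analysis built on a first integral of \eqref{p-nl-schrod}, and then recognise the Appell integral $K_p(\mu)$ inside a period integral. At the outset, since the non-linear terms are odd, I would replace $\phi$ by $-\phi$ if necessary so as to assume $\phi'(0)\ge 0$; this produces the sign $\pm$ in the statement. The first step is then to multiply \eqref{p-nl-schrod} by $\phi'$ and use the identities $(\sgn(\phi')|\phi'|^{p-1})'\phi'=\tfrac{p-1}{p}(|\phi'|^p)'$ (which incidentally shows $|\phi'|^p\in C^1$, since $\sgn(\phi')|\phi'|^{p-1}$ is), $\sgn(\phi)|\phi|^{2p-1}\phi'=\tfrac1{2p}(|\phi|^{2p})'$ and $\sgn(\phi)|\phi|^{p-1}\phi'=\tfrac1p(|\phi|^p)'$, to conclude that
\[
   E:=|\phi'|^p-\tfrac12|\phi|^{2p}+\lambda|\phi|^p
\]
is constant along $\phi$; equivalently $|\phi'|^p=E-U(\phi)$ with $U(t)=\lambda|t|^p-\tfrac12|t|^{2p}$, whose critical points are $t=0$ (a local minimum, $U(0)=0$) and $t=\pm\lambda^{1/p}$ (with $U(\pm\lambda^{1/p})=\tfrac12\lambda^2$).

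\emph{Qualitative analysis and quantisation.} From $\phi(0)=0$ I get $E=|\phi'(0)|^p\ge 0$. I would rule out $E=0$ (then $\phi(0)=\phi'(0)=0$, and $|\phi'|^p=|\phi|^p(\tfrac12|\phi|^p-\lambda)$ forces $\phi\equiv 0$ near $0$, contradicting the hypothesis) and $E>\tfrac12\lambda^2$ (then $E-U>0$ everywhere, $\phi'$ never vanishes, $\phi$ is strictly monotone, contradicting $\phi(1)=\phi(0)=0$). This leaves $0<E\le\tfrac12\lambda^2$, and I would then argue that the borderline $E=\tfrac12\lambda^2$ cannot occur — for $1<p\le 2$ because such an orbit has infinite half-period (so $\phi$ is still increasing at $x=1$), and in general because it corresponds to $\mu=1$, excluded by the statement — so that $0<E<\tfrac12\lambda^2$ strictly. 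In that regime the trajectory of $|\phi'|^p=E-U(\phi)$ is the closed orbit encircling the origin: $\phi$ oscillates between $\mp A$ and $\pm A$ with $A^p=\lambda-\sqrt{\lambda^2-2E}$ and some half-period $\ell>0$, and its zeros in $[0,\infty)$ are $0,\ell,2\ell,\dots$; hence $\phi(1)=0$ forces $\ell=1/n$ for a unique $n\in\mathbb N$, namely one plus the number of sign changes of $\phi$ on $(0,1)$.

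\emph{Recognising $K_p$, and identification.} On the first quarter-period $\phi>0$ and $\phi'>0$, so $\tfrac1{2n}=\int_0^A(E-U(s))^{-1/p}\,\mathrm ds$. Setting $t_\pm=\lambda\pm\sqrt{\lambda^2-2E}$ (so $A^p=t_-$, $t_-t_+=2E$, $t_-+t_+=2\lambda$) and factoring $E-U(s)=\tfrac12(t_--s^p)(t_+-s^p)$, the substitution $s=t_-^{1/p}\sigma$ with $\mu^p:=t_-/t_+\in(0,1)$ gives
\[
   \tfrac1{2n}=\Big(\tfrac{1+\mu^p}{\lambda}\Big)^{1/p}\int_0^1(1-\sigma^p)^{-1/p}(1-\mu^p\sigma^p)^{-1/p}\,\mathrm d\sigma=\Big(\tfrac{1+\mu^p}{\lambda}\Big)^{1/p}K_p(\mu),
\]
so $\lambda=2^pn^p(1+\mu^p)K_p(\mu)^p$ and $A^p=t_-=\tfrac{2\mu^p\lambda}{1+\mu^p}=2^{p+1}\mu^pn^pK_p(\mu)^p$, i.e. $A=2^{(p+1)/p}n\mu K_p(\mu)$. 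On $(0,\tfrac1{2n})$ the function $v=\phi/A$ then solves $(v')^p=(2nK_p(\mu))^p(1-v^p)(1-\mu^pv^p)$ with $v(0)=0$, which is exactly the first-order equation satisfied there by $x\mapsto\sn_p(2nK_p(\mu)x,\mu)$; since the two agree initially and both leave the value $1$ at $x=\tfrac1{2n}$ without being locally constant, they coincide on $[0,\tfrac1{2n}]$, and their common oddness, $2$-periodicity (in the rescaled variable) and differentiability propagate the identity $\phi(x)=\pm A\,\sn_p(2nK_p(\mu)x,\mu)$ to all of $\mathbb R$. Uniqueness of $(\mu,n)$ then follows because $n$ is fixed by counting sign changes, the sign by $\sgn\phi'(0)$, and $\mu$ by $\lambda$, the map $\mu\mapsto(1+\mu^p)K_p(\mu)^p$ being strictly increasing.

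\emph{Converse and the hard part.} For the ``if'' direction I would simply insert the claimed $\phi$: compute $|\phi'|^p$ from the defining relation $(\sn_p')^p=(1-\sn_p^p)(1-\mu^p\sn_p^p)$, verify $|\phi'|^p=E-U(\phi)$ with $E=A^p(2nK_p(\mu))^p$, differentiate to recover \eqref{p-nl-schrod} off the maxima of $|\phi|$, extend the equation across those isolated points using the differentiability of $\sn_p(2K_p(\mu)x,\mu)$, and read $\phi(0)=\phi(1)=0$ off the vanishing of $\sn_p$ at integer multiples of $2K_p(\mu)$. The part I expect to be genuinely delicate is the qualitative step: establishing that the trajectory is a bona fide closed orbit around the origin with $0<E<\tfrac12\lambda^2$ — that is, ruling out escape to infinity and the orbit that touches the equilibria $\pm\lambda^{1/p}$ — from nothing more than the two-point boundary condition and the ``not locally constant'' hypothesis, with the case $p>2$ (where $K_p(1)<\infty$) demanding the most attention. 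Once that picture is secured, the change of variables and the comparison with $\sn_p$ are essentially bookkeeping.
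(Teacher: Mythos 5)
Your computational core coincides with the paper's own argument: multiplying \eqref{p-nl-schrod} by a multiple of $\phi'$ to get the first integral, factorising $E-U(\phi)=\tfrac12(t_--\phi^p)(t_+-\phi^p)$ with $t_\pm=\lambda\pm\sqrt{\lambda^2-2E}$, and substituting to recognise $K_p(\mu)$ with $\mu^p=t_-/t_+$ is exactly what the paper does (with its $c=\phi'(0)$, $\alpha=t_+$, $\beta=t_-$), and your formulas for $\lambda$, the amplitude $A=2^{(p+1)/p}n\mu K_p(\mu)$ and the identification with $\sn_p$ all agree with the paper, which treats $n=1$ and then rescales.

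The genuine gap is the qualitative step you yourself flag at the end, and it is precisely the point where the paper does \emph{not} argue by hand: it invokes Guedda--V\'eron \cite{GV1988} (Theorem~2.1 for $1<p\le2$, Theorem~2.2 for $p>2$, the latter combined with the ``not constant on any open segment'' hypothesis) to obtain the symmetry of $\phi$ about its interior critical point, the uniqueness of the solution, and the exclusion of the degenerate $p>2$ scenarios. Your proposed exclusion of the separatrix orbit $E=\tfrac12\lambda^2$ ``because it corresponds to $\mu=1$, excluded by the statement'' is circular: the statement is what is being proved, so membership of $\mu$ in $(0,1)$ cannot be assumed. For $1<p\le2$ your infinite-half-period argument does the job (this is the paper's observation that $K_p(\mu)\to\infty$ as $\mu\to1$ exactly when $p\le2$), but for $p>2$ the case is not vacuous: since $\int_0^1(1-s^p)^{-2/p}\,\mathrm{d}s<\infty$, the orbit with $E=\tfrac12\lambda^2$ reaches the equilibrium level $\lambda^{1/p}$ in finite time, and there the first-order relation $|\phi'|^p=\tfrac12(\lambda-\phi^p)^2$ is non-Lipschitz, which is exactly the mechanism producing the plateau solutions classified in \cite{GV1988}; for suitably tuned $\lambda$ one must also dispose of trajectories that merely touch the level $\lambda^{1/p}$ between the two boundary zeros, and these are not eliminated by the not-locally-constant hypothesis alone. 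Your plan offers no replacement for the classification result at this point, so as it stands the ``only if'' direction is incomplete for $p>2$.

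A related, smaller omission of the same nature: your assertion that the trajectory is a closed orbit whose zeros are exactly $0,\ell,2\ell,\dots$, and that each arch is symmetric (which you use to propagate the identity with $\sn_p$ by oddness and periodicity), presupposes uniqueness of continuation through the turning points $\phi^p=t_-$, where the first-order equation is again non-Lipschitz. For the subcritical orbits this can be repaired by returning to the second-order equation (the forcing term $(p-1)\phi^{p-1}(\lambda-\phi^p)$ is nonzero at $\phi^p=t_-<\lambda$, so the solution can neither linger nor fail to descend along the same orbit), but it needs to be said; the paper obtains the symmetry and uniqueness statements wholesale from \cite{GV1988}. With those two points supplied --- either by importing the Guedda--V\'eron theorems as the paper does, or by a genuine analysis of the degenerate turning points --- the rest of your argument, including the ``if'' direction by direct substitution, is sound.
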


Here $n$ determines the number of zeros of the eigenfunction in the segment $(0,1)$ and, alongside with $\mu$ and $p$, it determines its norm. This stametent is a direct consequence of \cite[Theorem~2.1]{GV1988} for $1<p\leq 2$ and \cite[Theorem~2.2]{GV1988} for $p>2$. We give details of the proof. 
 
\begin{proof} We focus on the ``only if'' part of the proof. The ``if'' part may be checked by direct substitution, but it can be better confirmed by reversing the arguments below. 

Let $\phi(x)$ and $\lambda>0$, be such that \eqref{p-nl-schrod} holds and $\phi(x)>0$ for all $x\in (0,1)$.  Then, 
\[
     \begin{aligned}
&[(\phi')^{p-1}]'-(p-1)\phi^{2p-1}+\lambda (p-1) \phi^{p-1}=0, \\
&\phi(0)=\phi(1)=0.
\end{aligned}
\] 
Multiplying each term of this equation by $\frac{p}{p-1}\phi'$, yields
\[
     \frac{p}{p-1}\phi'[(\phi')^{p-1}]'=[(\phi')^{p}]', \qquad \frac{p}{p-1}\phi'(p-1)\phi^{2p-1}=\frac12 (\phi^{2p})' 
\]
and
\[
     \frac{p}{p-1}\phi'\lambda (p-1)\phi^{p-1}=\lambda (\phi^p)'.
\]
Then,
\[
    (\phi')^p=\frac12 \phi^{2p} -\lambda \phi^p+c^p=\frac12 (\alpha-\phi^p)(\beta-\phi^p)
\]
for $c=\phi'(0)>0$ and 
\[
     \alpha,\,\beta=\lambda \pm \sqrt{\lambda^2-2c^p}.
\]
Here $\alpha$ picks the `$+$' sign and $\beta$ the `$-$' sign.
Hence,
\begin{equation} \label{ch-equ-int}
    \phi'=2^{-\frac{1}{p}} \left[(\alpha-\phi^p)(\beta-\phi^p)\right]^{\frac{1}{p}}.
\end{equation}

Since $\phi(0)=\phi(1)=0$, there exists $x_0\in(0,1)$ such that $\phi'(x_0)=0$. As $\phi$ is positive by our assumption, it is then increasing from $x=0$. That is,  $\phi'(x)>0$ for all $x\in [0,x_0)$ and $0<\beta \leq \alpha$ (both roots should be positive and real). Also,
\[
     \phi^p(x_0)=\beta >0 \qquad \text{and} \qquad \lambda^2\geq 2c^p >0.
\]  
Let $a^p=\alpha$ and $b^p=\beta$. Integrating \eqref{ch-equ-int}, gives
\[
       x=\int_{0}^\phi (a^p-t^p)^{-\frac{1}{p}}(b^p-t^p)^{-\frac{1}{p}}\,\mathrm{d}t
\]
for $0\leq x(\phi)\leq x_0$ and $0\leq \phi(x)\leq b$, both increasing. Writing
\[
       x=\frac{2^{\frac{1}{p}}}{ab}\int_0^\phi \left(1-\left(\frac{t}{a}\right)^{p}\right)^{-\frac{1}{p}}
\left(1-\left(\frac{t}{b}\right)^{p}\right)^{-\frac{1}{p}}\,\mathrm{d}t
\] 
and changing variables to $s=\frac{t}{b}$, then calling $\mu=\frac{b}{a}$, gives
\[
      x=\frac{2^{\frac{1}{p}}}{a}
\int_{0}^{\frac{\phi}{b}} (1-\mu^ps^p)^{-\frac{1}{p}} (1-s^p)^{-\frac{1}{p}}\,\mathrm{d}s.
\]
Hence, with $w_p(z)$ as above,
\[
     w_p\left(\frac{\phi(x)}{b}   \right)=\frac{ax}{2^{\frac{1}{p}}}.
\]
The function $\phi(x)$ attains its maximum at $x=x_0$ and $\phi(x_0)=b$.
With the definition of $K_p(\mu)$ as above,
this gives
\[
      K_p(\mu)=\frac{ax_0}{2^{\frac{1}{p}}}.
\]

Now, by virtue of \cite[Theorem~2.1]{GV1988} for $1<p\leq 2$ or \cite[Theorem~2.2]{GV1988} for $p>2$ where for the latter we use the hypothesis that $\phi$ is not constant on any open segment, it follows that $\phi$ is even with respect to $x_0$. Then, $\phi(2x_0)=0$. But, because $2x_0$ is the first zero of $\phi$, then necessarily $x_0=\frac12$. Thus
\[
     a=2^{\frac{p+1}{p}}K_p(\mu).
\]
Therefore, recalling that $b=a\mu$, we have
\[
    \phi(x)=b\sn_p\left(\frac{ax}{2^{\frac{1}{p}}},\mu\right)=
2^{\frac{p+1}{p}}K_p(\mu)\mu \sn_p\left(2K_p(\mu)x,\mu\right)
\]
where $\sn_p(y,\mu)$ is the inverse function of $w_p(z)$ as defined above. 

This determines the expression for a positive eigenfunction $\phi(x)$ as stated in the theorem (that is for $n=1$) and it also shows that this eigenfunction is unique. Moreover, since
\[
     \lambda=\frac{a^p+b^p}{2}=2^p (1+\mu^p)K_p(\mu)^p,
\]
we also obtain the expression for the eigenvalue. So, we know that the eigenpair $(\phi,\lambda)$ is determined uniquely from the pair $(\mu,1)\in(0,1)\times \mathbb{N}$, when $\phi$ is positive.

The claimed statements for $n\geq 2$ follow by evaluating the equation at $n\phi(nx)$. The solution is unique, assuming $\phi'(0)>0$ from \cite[Theorem~2.1]{GV1988} for $1<p\leq 2$, and from \cite[Theorem~2.2]{GV1988} for $p>2$ additionally assuming that $\phi$ is not locally constant. The two signs choice in the conclusion is a consequence of the fact that if $\phi(x)$ is an eigenfunction, then also $-\phi(x)$ is.
\end{proof}

Note that
\begin{equation} \label{representationKp}
\begin{aligned}     K_p(\mu)&=\int_0^1 \frac{x^{\frac{1}{p}-1}}{p}(1-x)^{-\frac{1}{p}}(1-\mu^p x)^{-\frac{1}{p}} \,\mathrm{d}x\\&= \frac{\operatorname{B}\left(\frac{1}{p},\frac{1}{p'}\right)}{p} \ _2\!\operatorname{F}_1\left(\frac{1}{p},\frac{1}{p};1,\mu^p    \right)\end{aligned}
\end{equation}
and
\[
    w_p(z)=zF_1\left(\frac{1}{p},\frac{1}{p},\frac{1}{p},\frac{p+1}{p};z^p,\mu^pz^p    \right).
\]
Therefore, $w_p(z)$ is given in terms of Appell hypergeometric functions \cite{E1950}. 

Turning back to the function $\sn_p(y,\mu)$, let us describe some of its structural  properties. Since
\[
    \frac{\mathrm{d}(\sn_p(y,\mu))}{\mathrm{d}y}=(1-[\sn_p(y,\mu)]^p)^{\frac{1}{p}} (1-\mu^p [\sn_p(y,\mu)]^p)^{\frac{1}{p}}
\]
for all $y\in[0,K_p(\mu)]$, then $\sn_p(y,\mu)$ and its periodic extension are continuously differentiable on $\mathbb{R}$. Moreover, 
\[
     \frac{\mathrm{d^2}(\sn_p(y,\mu))}{\mathrm{d}y^2}=h_p(\sn_p(y,\mu))
\]
where
\[
       h_p(z)=z^{p-1}(1-z^p)^{\frac{2}{p}-1}(1-\mu^pz^p)^{\frac{2}{p}-1}
       ((\mu^p+1)z^p-2)<0
\]
for all $z\in(0,1)$. Then, for the periodic extension (and $p\not=2$), $\sn_p''(y,\mu)<0$ for $y\in[0,K_p(\mu))$ and $\sn_p''(y,\mu)>0$ whenever $y\in (K_p(\mu),2K_p(\mu)]$. At the 1/4-period $y=K_p(\mu)$ and at its odd integer multiples, the second derivative is continuous for all $1<p\leq 2$ but it has a singularity for all $p>2$. Nonetheless, this second derivative is always locally $L^1$, because 
\[
     \int_{K_p(\mu)-\epsilon}^{K_p(\mu)+\epsilon} \!|\sn_p''(y,\mu)|\,\mathrm{d}y=-\sn_p'(y,\mu)\Big|_{y=K_p(\mu)-\epsilon}^{y=K_p(\mu)}\!\!\!+ \sn_p'(y,\mu)\Big|_{y=K_p(\mu)}^{y=K_p(\mu)+\epsilon}\!\!\!<\infty.
\]
That is,
\begin{equation}\label{diffdiff_in_L1loc}
\sn_p''(\cdot,\mu)\in \begin{cases} C(\mathbb{R}) & 1<p\leq 2 \\
L^1_{\mathrm{loc}}(\mathbb{R}) & p>2.\end{cases}\end{equation}
All these basic properties will be invoked in the proof of our main theorem below. 

The next lemma can be regarded as a version of the classical Jordan Inequality. The case corresponding to $\mu=0$ was established in \cite[Proposition~2.3]{BE2012}. 

\begin{Lemma} \label{ineq_snp_lin}
For all $y\in\left(0,K_p(\mu)\right)$,
\[
     \frac{1}{K_p(\mu)} \leq \frac{\sn_p(y,\mu)}{y} \leq 1.
\]
\end{Lemma}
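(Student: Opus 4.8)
The plan is to work directly with the defining integral relation rather than with the differential equation. Writing $g(s) = (1-s^p)^{-1/p}(1-\mu^p s^p)^{-1/p}$, recall that $\sn_p(\cdot,\mu)$ is the inverse of $w_p(z) = \int_0^z g(s)\,\mathrm{d}s$ on $[0,1]$, so for $y \in (0,K_p(\mu))$ and $z := \sn_p(y,\mu) \in (0,1)$ we have $y = \int_0^z g(s)\,\mathrm{d}s$. The proof rests on two elementary facts about $g$: first, $g(s) \ge 1$ for every $s \in [0,1)$, since each of the two factors is at least $1$ there; second, $g$ is non-decreasing on $[0,1)$, being a product of the two non-decreasing positive functions $s \mapsto (1-s^p)^{-1/p}$ and $s \mapsto (1-\mu^p s^p)^{-1/p}$.

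For the upper bound I would integrate $g \ge 1$ over $(0,z)$, obtaining $y = \int_0^z g(s)\,\mathrm{d}s \ge z$, that is $\sn_p(y,\mu)/y = z/y \le 1$. For the lower bound I would rescale the integral via $s = zt$:
\[
y = \int_0^z g(s)\,\mathrm{d}s = z\int_0^1 g(zt)\,\mathrm{d}t \le z\int_0^1 g(t)\,\mathrm{d}t = z\,K_p(\mu),
\]
where the inequality is $g(zt) \le g(t)$ (valid because $zt \le t$ and $g$ is non-decreasing) and the last equality is the definition of $K_p(\mu)$. This gives $\sn_p(y,\mu)/y = z/y \ge 1/K_p(\mu)$.

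An alternative route is to invoke concavity: since $\sn_p''(\cdot,\mu) \le 0$ on $[0,K_p(\mu))$ and $\sn_p(0,\mu) = 0$, the map $y \mapsto \sn_p(y,\mu)/y$ is non-increasing on $(0,K_p(\mu))$, hence squeezed between its right-endpoint value $\sn_p(K_p(\mu),\mu)/K_p(\mu) = 1/K_p(\mu)$ and its limit at $0^+$, which is $\sn_p'(0,\mu) = 1$. I do not expect a real obstacle here; the only points deserving a line of care are the monotonicity of $g$ and the behaviour at the endpoints $y = 0$ and $y = K_p(\mu)$, where the stated inequalities degenerate to equalities. The statement is essentially a $p$-analogue of the classical Jordan inequality, matching the $\mu = 0$ case of \cite[Proposition~2.3]{BE2012}.
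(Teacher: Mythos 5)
Your proposal is correct and is essentially the paper's own argument: the paper substitutes $s=zr$ in $w_p(z)$ to write $y=\sn_p(y,\mu)\,A$ with $A=\int_0^1(1-\sn_p^p r^p)^{-1/p}(1-\mu^p\sn_p^p r^p)^{-1/p}\,\mathrm{d}r$ and bounds $1\le A\le K_p(\mu)$ using $0<\sn_p(y,\mu)<1$, which is exactly your pair of estimates (integrand $\ge 1$, and integrand dominated by that of $K_p(\mu)$ via monotonicity). No gaps; your concavity remark is a valid alternative but not needed.
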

\begin{proof}
    Changing variables to $s=zr$ in the integral defining $w_p(z)$, yields
\[
    w_p(z)=z\int_0^1 (1-y^pr^p)^{-\frac{1}{p}}(1-\mu^pz^pr^p)^{-\frac{1}{p}}\,\mathrm{d}r.
\]
Then, substituting $y=w_p(z)$, gives
\[
     y=\sn_p(y,\mu)\int_0^1 (1-\sn_p^p(y,\mu)r^p)^{-\frac{1}{p}}(1-\mu^p\sn_p^p(y,\mu)r^p)^{-\frac{1}{p}} \, \mathrm{d}r.
\]
Denote the integral on the right hand side by $A$. Since $0<\sn_p(y,\mu)<1$, we conclude that 
\[
     1\leq A\leq \int_{0}^1(1-r^p)^{-\frac{1}{p}}(1-\mu^p r^p)^{-\frac{1}{p}} \, \mathrm{d}r=K_p(\mu).
\]
\end{proof}

For $\underline{\mu}=\{\mu_n\}_{n=1}^\infty\in (0,1)^{\infty}$, we write
\[
    f_n(x)\equiv f_{n,\underline{\mu}}(x)=\sn_p(2K_p(\mu_n)nx,\mu_n).
\]
Whenever it is sufficiently clear from the context, we leave implicit the dependence of $\{f_n\}$ on $\underline{\mu}$. Then,
\[
     \phi_n(x)=2^{\frac{p+1}{p}}\mu_n nK_p(\mu_n) f_n(x) 
\]
form a collection of eigenfunctions of \eqref{p-nl-schrod}, for $n\in\mathbb{N}$. Our next statement is the first main contribution of this paper. It gives sufficient conditions on $\underline{\mu}$ for $\{f_{n,\underline{\mu}}\}$ to be a Riesz basis of $L^2(0,1)$. 

\begin{figure}[t]
\includegraphics[width=60mm]{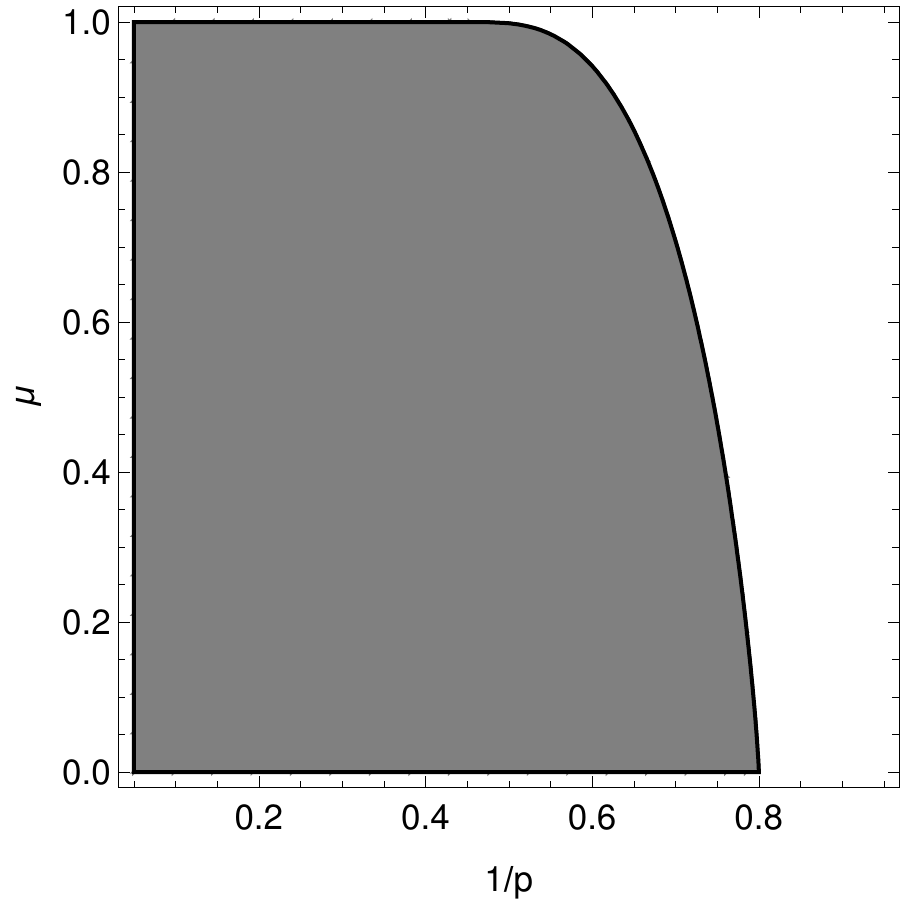}
\caption{\label{Figure1}In the shaded region, $K_{p}(\mu)<\frac{8}{\pi^2-8}$. The numerical approximations employed might not be too accurate for $\mu$ near 1. But away from that region, the graph gives a sharp approximation illustrating the interplay between the two parameters for the conclusion of Theorem~\ref{fn_basis} to hold true.}
\end{figure}

\begin{Theorem} \label{fn_basis}
 Let $p>1$ and $\underline{\mu} \in (0,1)^{\infty}$. If
\begin{equation} \label{firstcond}
      \sup_{n\in\mathbb{N}}K_p(\mu_n)<\frac{8}{\pi^2-8},
\end{equation}
then $\{f_{n,\underline{\mu}}\}$ is a Riesz basis of $L^2(0,1)$.
\end{Theorem}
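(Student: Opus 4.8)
\emph{Proof plan.} The plan is to realise $\{f_{n,\underline{\mu}}\}$ as the image of the sine orthonormal basis $e_m(x)=\sqrt 2\sin(m\pi x)$ of $L^2(0,1)$ under a bounded, boundedly invertible operator $T$; one writes $T$ as an invertible diagonal part plus an off‑diagonal remainder, and controls the remainder by a Schur test. Concretely, write $g_n(t)=\sn_p(2K_p(\mu_n)t,\mu_n)$, so that $f_{n,\underline{\mu}}(x)=g_n(nx)$. Being odd, $2$‑periodic and even about $t=\tfrac12$, the function $g_n$ has the half‑wave symmetry $g_n(t+1)=-g_n(t)$, so its sine expansion on $(0,1)$ involves only odd harmonics: $g_n(t)=\sum_{j\ \mathrm{odd}}\beta_j(\mu_n)\sin(j\pi t)$ with $\beta_j(\mu)=2\int_0^1\sn_p(2K_p(\mu)t,\mu)\sin(j\pi t)\,\mathrm{d}t$. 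Dilating gives $f_{n,\underline{\mu}}=\sum_{j\ \mathrm{odd}}\tfrac{\beta_j(\mu_n)}{\sqrt 2}\,e_{jn}$, so the $n$‑th eigenfunction excites only the frequencies $n,3n,5n,\dots$. Accordingly I set $Te_n=f_{n,\underline{\mu}}$ and split $T=D+E$, where $De_n=\tfrac{\beta_1(\mu_n)}{\sqrt 2}e_n$ and $Ee_n=\sum_{j\ge 3,\ j\ \mathrm{odd}}\tfrac{\beta_j(\mu_n)}{\sqrt 2}e_{jn}$.

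Two scalar estimates feed the argument. First, Lemma~\ref{ineq_snp_lin} together with the symmetry about $\tfrac12$ gives $\sn_p(2K_p(\mu)t,\mu)\ge 2\min\{t,1-t\}$ on $(0,1)$, hence $\beta_1(\mu)\ge 4\int_0^1\min\{t,1-t\}\sin(\pi t)\,\mathrm{d}t=\tfrac{8}{\pi^2}$, while $\beta_1(\mu)\le\tfrac{4}{\pi}$ trivially; thus $D$ is bounded and boundedly invertible, uniformly in $\underline{\mu}$, with $\|D^{-1}\|\le\tfrac{\sqrt 2\,\pi^2}{8}$. Second, since $g_n\in C^1[0,1]$ with $g_n''\in L^1(0,1)$ by \eqref{diffdiff_in_L1loc}, $g_n(0)=g_n(1)=0$, and $\sin(j\pi\cdot)$ vanishes at the endpoints, two integrations by parts yield $\beta_j(\mu)=-\tfrac{2}{(j\pi)^2}\int_0^1 g_n''(t)\sin(j\pi t)\,\mathrm{d}t$, so $|\beta_j(\mu)|\le\tfrac{2}{(j\pi)^2}\|g_n''\|_{L^1(0,1)}$. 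Because $g_n''=(2K_p(\mu))^2 h_p(g_n)\le 0$ on $(0,1)$, the function $g_n$ is concave there, with $g_n'(0^+)=2K_p(\mu)=-g_n'(1^-)$, so $\|g_n''\|_{L^1(0,1)}=4K_p(\mu)$ and therefore $|\beta_j(\mu)|\le\tfrac{8K_p(\mu)}{\pi^2 j^2}$.

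Now put $K^\ast=\sup_n K_p(\mu_n)$, which is finite by \eqref{firstcond}. In the $\{e_m\}$ basis, $E$ has nonzero entries only at positions $(jn,n)$ with $j\ge 3$ odd, equal to $\tfrac{\beta_j(\mu_n)}{\sqrt 2}$. A column sum is bounded by $\tfrac{8K^\ast}{\sqrt 2\,\pi^2}\sum_{j\ge 3,\ j\ \mathrm{odd}}j^{-2}=\tfrac{K^\ast(\pi^2-8)}{\sqrt 2\,\pi^2}$. The key point is that a row sum obeys the same bound: for fixed $m$ the contributing indices $n$ are exactly those with $j=m/n$ an odd divisor of $m$ satisfying $j\ge 3$, and such $j$ lie among the odd integers $\ge 3$, so the row sum is again at most $\tfrac{8K^\ast}{\sqrt 2\,\pi^2}\sum_{j\ge 3,\ j\ \mathrm{odd}}j^{-2}=\tfrac{K^\ast(\pi^2-8)}{\sqrt 2\,\pi^2}$. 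The Schur test then gives $\|E\|\le\tfrac{K^\ast(\pi^2-8)}{\sqrt 2\,\pi^2}$, hence $\|D^{-1}E\|\le\tfrac{K^\ast(\pi^2-8)}{8}$, which is $<1$ precisely under \eqref{firstcond}. Consequently $T=D(I+D^{-1}E)$ is boundedly invertible, so $\{f_{n,\underline{\mu}}\}=\{Te_n\}$ is a Riesz basis of $L^2(0,1)$ — completeness coming for free from invertibility of $T$.

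The genuine obstacle is pinning down the \emph{sharp} threshold. A crude perturbation estimate such as $\sum_n\bigl\|f_{n,\underline{\mu}}/\|f_{n,\underline{\mu}}\|-e_n\bigr\|^2<\infty$ is useless here, since that sum need not even be finite once $\mu_n\not\to 0$; one must keep the entire operator and exploit that $f_{n,\underline{\mu}}$ is supported on the progression $\{(2k-1)n\}_k$. This is what forces \emph{both} directions of the Schur test to see only the odd tail $\sum_{j\ge 3,\ j\ \mathrm{odd}}j^{-2}=\tfrac{\pi^2-8}{8}$, and it is the identity $\|g_n''\|_{L^1(0,1)}=4K_p(\mu_n)$ — rather than a mere bound $\le C\,K_p(\mu_n)$ — that makes the constant in \eqref{firstcond} come out exactly as $\tfrac{8}{\pi^2-8}$.
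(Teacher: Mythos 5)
Your proof is correct and is essentially the paper's argument: the same operator $e_n\longmapsto f_{n,\underline{\mu}}$, split into the $k=1$ diagonal part (bounded below by $4\sqrt{2}/\pi^2$, i.e. $\beta_1\geq 8/\pi^2$, via the Jordan-type Lemma~\ref{ineq_snp_lin}) plus the odd $k\geq 3$ remainder controlled through two integrations by parts and the $L^1$-norm of the second derivative, followed by a Neumann-series inversion producing exactly the threshold $\sup_n K_p(\mu_n)<\tfrac{8}{\pi^2-8}$. The only cosmetic difference is that you estimate the remainder $E$ by a Schur test on its matrix (using that the contributing $j$ in each row are distinct odd divisors $\geq 3$), whereas the paper writes it as $\sum_{k\geq 3}M_kA_k$, an isometry times a diagonal operator for each $k$, and sums the norms $\sup_n|\tau_k(n)|$ — both routes give the identical bound.
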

\begin{proof}
We split the argument into five different steps. 

\underline{Step~1}. We describe explicitly the linear operator $A:e_n\longmapsto f_{n,\underline{\mu}}$ in terms of isometries and diagonal operators of $L^2$. 

Let
\[
    \tau_{k}(n)=\sqrt{2}\int_0^1 \sn_p(2K_p(\mu_n)x,\mu_n) \sin(k\pi x)\,\mathrm{d}x
\]
be the $k$-th sine Fourier coefficient of the function $\sn_p(2K_p(\mu_n)\cdot,\mu_n)$. Since the latter is even with respect to $x=\frac12$, then $\tau_j(n)=0$ for all even index $j$ and all $n\in\mathbb{N}$. Let isommetries $M_k:L^2\longrightarrow L^2$ be given by $M_{k}e_n(x)=e_{kn}(x)$. Let the diagonal operators $A_k:L^2\longrightarrow L^2$ be given by $A_k=\operatorname{diag}[\tau_k(n):n\in\mathbb{N}]$. That is, we fix the index $k$ of the Fourier coefficient and move the index $n$ of the entries of $\underline{\mu}$.

Since
\[
     f_{n,\underline{\mu}}(x)=\sum_{k=1}^\infty \tau_k(n) M_k e_n(x)=
 \sum_{k=1}^\infty  M_k A_k e_n(x),
\]
and both families of operators are bounded, then
\[
     A=\sum_{k=1}^\infty M_kA_k.
\]
Here the series is absolutely covergent in operator norm. This follows from the arguments given in Step~4 below, as these arguments show that
\[
    \sum_{k=1}^\infty \|M_k A_k\|=\sum_{k=1}^\infty \|A_k\|<\infty.
\]

\underline{Step~2}. We claim that
\[
       \tau_1(n)\geq \frac{4\sqrt{2}}{\pi^2}
\]
for all  $n\in\mathbb{N}$, irrespective of the choice of $\underline{\mu}$. Indeed, by substituting $y=2xK_p(\mu_n)$ in Lemma~\ref{ineq_snp_lin}, follows that
\begin{align*}
   \tau_1(n)& = 2\sqrt{2}\int_0^{\frac12} \sn_p(2 K_p(\mu_n)x,\mu_n) \sin(\pi x) \,\mathrm{d}x \\
&\geq 4\sqrt{2}  \int_0^{\frac12} x  \sin(\pi x)\,\mathrm{d}x =\frac{4\sqrt{2}}{\pi^2}.
\end{align*}

\underline{Step~3}. We next show that
\[
    \sum_{\substack{k=3\\ k\equiv_2 1}} \sup_{n\in \mathbb{N}}|\tau_k(n)| \leq \frac{4 \sqrt{2}}{\pi^2} \left(\frac{\pi^2}{8}-1 \right)\sup_{n\in \mathbb{N}}K_p(\mu_n).
\]
For this purpose, let $g(x)=\sn_p(2K_p(\mu_n)x,\mu_n)$ and recall the regularity properties of $\sn_p(y,\mu)$ given in \eqref{diffdiff_in_L1loc}. For $k\geq 3$ odd, integrating by parts twice and noting that $g'(\frac12)=0$, gives 
\[
    \tau_k(n)=\frac{2 \sqrt{2}}{n^2 \pi^2} \int_0^{\frac12} g''(x) \sin(k\pi x)\,\mathrm{d}x.
\] 
Hence, since $g''(x)<0$ for all $x\in (0,\frac12)$, 
\begin{align*}
|\tau_k(n)|&\leq  \frac{2 \sqrt{2}}{k^2 \pi^2} \int_0^{\frac12} |g''(x)| \,\mathrm{d}x =
\frac{2 \sqrt{2}}{k^2 \pi^2} \left(g'(0)-g'\Big(\frac12\Big)\right) \\
&= \frac{4 \sqrt{2}K_p(\mu_n)}{k^2 \pi^2}
[\sn_p'(0,\mu_n)-\sn_p'(K_p(\mu_n),\mu_n)]=
\frac{4 \sqrt{2}K_p(\mu_n)}{k^2 \pi^2}. 
\end{align*}
By taking the suprema in $n$ and then the summation in the index $k$, this yields the claim made above.

\underline{Step~4}. If
\begin{equation} \label{hypo_invert}
     \sum_{\substack{k=3\\ k\equiv_2 1}} \sup_{n\in \mathbb{N}} |\tau_k(n)|<\inf_{n\in \mathbb{N}}\tau_1(n),
\end{equation}
then $A$ is an invertible operator. Note that the right hand side of this inequality is always positive according to the step~2. 

Assume that \eqref{hypo_invert} holds true. To show that $A$ is invertible, firstly note that the left hand side of this inequality equals 
\[
      \sum_{\substack{k=3\\ k\equiv_2 1}} \|M_kA_k\|.
\]
Indeed, 
\[
    \|A_k\|=\sup_{n\in \mathbb{N}} |\tau_k(n)|. 
\]
And, since $M_k$ are isommetries, for all $\varepsilon>0$ there is $\tilde{v}\in L^2$, such that $\|\tilde{v}\|=1$ and
\[
    \|M_kA_k\tilde{v}\|=\|A_k\tilde{v}\|> \|A_k\|-\varepsilon.
\]
Then $\|A_k\|\geq \|M_kA_k\|\geq \|A_k\|-\varepsilon$. Taking $\varepsilon\to 0$ gives 
\begin{equation}  \label{normlhs}
    \sum_{\substack{k=3\\ k\equiv_2 1}} \sup_{n\in \mathbb{N}} |\tau_k(n)|=
\sum_{\substack{k=3\\ k\equiv_2 1}}^\infty \|M_kA_k\|.
\end{equation}

Now, since $\inf_{n\in \mathbb{N}}\tau_1(n)>0$ and
\[
     \|A_1^{-1}\|= \frac{1}{\inf_{n\in \mathbb{N}}\tau_1(n)},
\]
then $A_1$ is invertible. Note also that $M_1=I$, the identity operator. Then, 
\[
   A=M_1A_1+\sum_{\substack{k=3\\ k\equiv_2 1}}^\infty M_kA_k=
   A_1\Big(I+A_1^{-1}\sum_{\substack{k=3\\ k\equiv_2 1}}^\infty M_kA_k\Big).
\]
Moreover, from \eqref{normlhs} and the hypothesis \eqref{hypo_invert}, we have
\[
    \Big\|A_1^{-1}\sum_{\substack{k=3\\ k\equiv_2 1}}^\infty M_kA_k\Big\|\leq \|A_1^{-1}\|\sum_{\substack{k=3\\ k\equiv_2 1}}^\infty\|M_kA_k\|<1.
\]
Hence, indeed $A$ is invertible. 

\underline{Step~5}. According to step~3, \eqref{firstcond} implies that  
\[
    \sum_{\substack{k=3\\ k\equiv_2 1}} \sup_{n\in \mathbb{N}} |\tau_k(n)|<\frac{4\sqrt{2}}{\pi^2}.
\]
But from step~2, we know that \eqref{hypo_invert} holds true. Therefore, as $A$ is invertible and $A:e_{n}\longmapsto f_n$, we have $\{f_n\}$ equivalent to the orthonormal basis $\{e_n\}$.
\end{proof}

Note that the condition \eqref{firstcond} holds for $p\approx 2$ and $\sup \mu_n$ small enough, not necessarily approaching zero as $n\to\infty$. Indeed $K_p(\mu)\approx \pi/2$ under these conditions. Figure~\ref{Figure1} shows the interplay between the parameters $\mu$ and $1/p$ for the hypothesis of the theorem to be verified. The value of $K_p(\mu)$ there was found from a computer approximation of the representation \eqref{representationKp}.

The proof of the next statement follows in a straightforward manner from Theorem~\ref{fn_basis} by  changing a finite number of terms in the sequence $\mu_n$ and re-scaling $f_{n,\underline{\mu}}$.  

\begin{Corollary} \label{corollary1}
If $\{\mu_n\}_{n=1}^\infty\subset (0,1)$ is such that 
\[
    \limsup_{n\to \infty} K_p(\mu_n)<\frac{8}{\pi^2-8},    
\] 
then the family $\{
\phi_n(\cdot,\mu_n)\}$ of eigenfunctions of \eqref{p-nl-schrod} forms a basis of $L^2(0,1)$.
\end{Corollary}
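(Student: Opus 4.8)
The plan is to reduce the statement to Theorem~\ref{fn_basis} by a finite‑rank perturbation of the sequence $\underline{\mu}$, followed by a short Fourier‑support computation. By the hypothesis $\limsup_n K_p(\mu_n)<\frac{8}{\pi^2-8}$ there is $N\in\mathbb{N}$ with $K_p(\mu_n)<\frac{8}{\pi^2-8}$ for all $n\geq N$. I would set $\tilde{\underline{\mu}}\in(0,1)^{\infty}$ to be $\tilde\mu_n=\mu_N$ for $n<N$ and $\tilde\mu_n=\mu_n$ for $n\geq N$, so that $\sup_n K_p(\tilde\mu_n)<\frac{8}{\pi^2-8}$. Theorem~\ref{fn_basis} then gives that $\tilde A\colon e_n\longmapsto f_{n,\tilde{\underline{\mu}}}$ extends to a bounded operator on $L^2(0,1)$ with bounded inverse. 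The operator $A\colon e_n\longmapsto f_{n,\underline{\mu}}$ is also bounded, since the estimate in Step~4 of the proof of Theorem~\ref{fn_basis} only uses $\sup_n K_p(\mu_n)<\infty$, and this supremum is finite because it differs from $\sup_{n\geq N}K_p(\tilde\mu_n)$ by at most the finitely many finite numbers $K_p(\mu_1),\dots,K_p(\mu_{N-1})$. As $f_{n,\underline{\mu}}=f_{n,\tilde{\underline{\mu}}}$ for $n\geq N$, the operator $A-\tilde A$ annihilates $e_n$ for $n\geq N$, hence has rank at most $N-1$; writing $A=\tilde A\bigl(I+\tilde A^{-1}(A-\tilde A)\bigr)$ exhibits $A$ as a finite‑rank perturbation of an invertible operator, so $A$ is Fredholm of index $0$.

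It then suffices to show that $A$ is injective, because an injective Fredholm operator of index $0$ is a bounded bijection, which makes $\{f_{n,\underline{\mu}}\}$ a Riesz basis of $L^2(0,1)$. To this end suppose $\sum_n c_n f_{n,\underline{\mu}}=0$ with $(c_n)\in\ell^2$. Separating the unchanged tail and using $f_{n,\underline{\mu}}=f_{n,\tilde{\underline{\mu}}}$ for $n\geq N$, the vector $v:=\sum_{n<N}c_n f_{n,\underline{\mu}}=-\sum_{n\geq N}c_n f_{n,\tilde{\underline{\mu}}}$ lies in $\overline{\operatorname{span}}\{f_{n,\tilde{\underline{\mu}}}:n\geq N\}$. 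Since each $f_{n,\tilde{\underline{\mu}}}$ has sine Fourier series $\sum_{k\equiv_2 1}\tau_k(n)e_{nk}$, for $n\geq N$ it belongs to $\overline{\operatorname{span}}\{e_m:m\geq N\}$; hence so does $v$, and $\langle v,e_m\rangle=0$ for $1\leq m\leq N-1$. On the other hand $\langle f_{n,\underline{\mu}},e_m\rangle=\tau_m(n)$ is nonzero only when $n\mid m$ and $m/n$ is odd, and equals $\tau_1(n)\geq\frac{4\sqrt{2}}{\pi^2}>0$ when $m=n$, by Step~2 of the proof of Theorem~\ref{fn_basis}. Thus the square array $\bigl(\langle f_{n,\underline{\mu}},e_m\rangle\bigr)_{m,n=1}^{N-1}$ is lower triangular with strictly positive diagonal, so the relations $\sum_{n<N}c_n\tau_m(n)=0$, solved recursively in $m=1,\dots,N-1$, force $c_1=\dots=c_{N-1}=0$. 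Then $\sum_{n\geq N}c_n f_{n,\tilde{\underline{\mu}}}=0$, and the Riesz basis property of $\{f_{n,\tilde{\underline{\mu}}}\}$ yields $c_n=0$ for all $n\geq N$ as well; hence $A$ is injective.

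To finish, I would observe that $\phi_n(\cdot,\mu_n)=2^{\frac{p+1}{p}}\mu_n n K_p(\mu_n)f_{n,\underline{\mu}}$ is a nonzero scalar multiple of $f_{n,\underline{\mu}}$, and that multiplying the elements of a Schauder basis by nonzero scalars leaves the partial‑sum projections unchanged, hence preserves the basis property. Therefore $\{\phi_n(\cdot,\mu_n)\}$ is a basis of $L^2(0,1)$, which is the claim. One may alternatively invoke the classical stability theorem that a system obtained from a Riesz basis by altering finitely many vectors is itself a basis precisely when it is complete and minimal; both properties reduce here to the same lower‑triangular computation with the coefficients $\tau_m(n)$, so nothing beyond a change of wording is gained.

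The only step I expect to require genuine work is the injectivity verification: one must identify $\overline{\operatorname{span}}\{f_{n,\tilde{\underline{\mu}}}:n\geq N\}$ with a subspace of $\overline{\operatorname{span}}\{e_m:m\geq N\}$ and then exploit the arithmetic structure of the Fourier supports of the $f_{n,\underline{\mu}}$ together with the positivity $\tau_1(n)\geq 4\sqrt{2}/\pi^2$. The reduction to Theorem~\ref{fn_basis} and the final rescaling are routine.
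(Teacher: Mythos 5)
Your proposal is correct and follows the same route the paper indicates for this corollary: modify finitely many terms of $\underline{\mu}$ so that \eqref{firstcond} holds, invoke Theorem~\ref{fn_basis}, and then rescale to pass from $f_{n,\underline{\mu}}$ to $\phi_n$. The only difference is one of detail rather than approach: where the paper calls the finite modification ``straightforward,'' you explicitly justify it (Fredholm index zero plus the lower-triangular Fourier-support argument giving injectivity of $A$, using $\tau_1(n)\geq 4\sqrt{2}/\pi^2$), which is a sound way to fill in that step; just note that choosing $N$ so that $\sup_{n\geq N}K_p(\mu_n)<\frac{8}{\pi^2-8}$ uses the $\varepsilon$-margin in the definition of $\limsup$, not merely termwise strict inequality.
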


The statements of Theorem~\ref{fn_basis} and Corollary~\ref{corollary1} are in line with the findings of \cite{ST2014}. The latter work examined bases properties of a different, but related, family of periodic functions in a regime where the corresponding generalised modulus sequence $\{\mu_n\}$ is constant. In particular, the condition given in \cite[Theorem~6.1]{ST2014} is consistent with the condition we found above. We will comment on the specific case $p=2$ at the end of the the next section.

%%%%%%%%%%%%%%%%%%%%%%%%%%%%%%%%%

\section{The semi-linear case}

We now focus on the case $p=2$. Lemma~3.2(i) and Theorem~3.3(i) of \cite{F1980a}, establish conditions for the eigenfunctions of \eqref{nl-schrod} to be a basis of $L^2$ by means of a different criterion than the one invoked in the proof of Theorem~\ref{fn_basis} above. These conditions are given in terms of a parameter, 
\[
     s=\pm\frac{4\pi q^{\frac12}}{1-q}\in\mathbb{R}
\]
where $q$ is the nome and the sign convention matches that of the eigenfunction. See \cite[(2.9)]{F1980a}.
Concretely, for the sequence $s=r_n$,  we know that
\begin{equation}  \label{eq3}
      \sum_{n=1}^\infty \left\|\frac{u_n}{n \langle u_n,e_n\rangle}-e_n\right\|^2<\infty
\end{equation}
 if and only if $\{r_n\}\in\ell^4$. This, alongside with $\omega$-linear independence, ensures that the family $\{\frac{u_n}{n \langle u_n,e_n\rangle}\}_{n=1}^\infty$ is a Riesz basis of $L^2$. The latter can, for example, be derived directly from the proof of \cite[Theorem~2.20, p.265]{K1980}. 

In \cite{F1980a}, the choice of the alternative parameter $s$ was convenient so to confirm the validity of \eqref{eq3}. In terms of the modulus $\mu\in(0,1)$,  \cite[p486]{WW1920} we know that  $\mu\sim q^{\frac12}$ as $q\to 0$. Hence, \eqref{eq3} is equivalent to the choice of $\mu=\mu_n$ in each of the bifurcation curves to be $\{\mu_n\}\in \ell^4$ also. As we can deduce from Theorem~\ref{fn_basis}, in the case $p=2$, the latter is sufficient but not necessary, for the family $\big\{\frac{u_n}{n \langle u_n,e_n\rangle}\big\}$ to become a Riesz basis. Our main goal now is to show that this family is in fact a Riesz basis for any choice of $\mu_n\in(0,1)$ such that $\sup \mu_n< \mu_0$ where $\mu_0$ is substantially closer to $1$.

The Jacobi elliptic function has Fourier expansion
\[
    \operatorname{sn}\big(2K(\mu)x\big)=\frac{2\pi q^{\frac12}}{K(\mu)\mu}\sum_{j=0}^\infty \frac{q^j}{1-q^{2j+1}} \sin\big((2j+1)\pi x\big).
\]
Let
\begin{equation} \label{eq4}
     g(x,\mu)\equiv g(x)=(1-q)\sum_{j=0}^\infty \frac{q^j}{1-q^{2j+1}}e_{2j+1}(x).
\end{equation}
Then, the eigenfunctions \eqref{nl-schrod_efu} of \eqref{nl-schrod} are
\[
     u_n(x)=\frac{2^{\frac52}\pi n q^{\frac12}}{1-q}g(nx,\mu).
\]
In order to establish an improvement to the statement of Theorem~\ref{fn_basis} for this specific case, we first characterise the summation of the Fourier coefficients of $g(x)$. 

For $0<\beta<1$, let  the Lambert series
\[
   L(\beta)=\sum_{n=1}^\infty \frac{\beta^n}{1-\beta^n}.
\]
Consider the $q$-digamma function
\[
     \psi_q(x)=\frac{\mathrm{d}}{\mathrm{d}x}\log \Gamma_q(x)=\frac{\Gamma_q'(x)}{\Gamma_q(x)}
\]
where 
\[
    \Gamma_q(x)=(1-q)^{1-x}\prod_{n=0}^{\infty} \frac{1-q^{n+1}}{1-q^{n+x}}
\]
is the $q$-gamma function. Then 
\[
     \psi_q(x)=-\log(1-q)+(\log q)\sum_{n=1}^\infty \frac{q^{nx}}{1-q^n},
\]
for all $x>0$ and $q\in(0,1)$. See \cite[(1.5)]{AG2007}. Hence,
\[
      L(\beta)=\frac{\psi_\beta(1)+\log(1-\beta)}{\log(\beta)}.
\]

We claim that 
\[ 
    \sum_{n=0}^\infty \frac{\beta^{2n+1}}{1-\beta^{4n+2}}=L(\beta)-2L(\beta^2)+L(\beta^4).
\]
Indeed, 
\[
    C=L(\beta)-L(\beta^2)=\sum_{n=0}^\infty \frac{\beta^{4n+1}}{1-\beta^{4n+1}}+\frac{\beta^{4n+3}}{1-\beta^{4n+3}}
\]
and
\[
    D=L(\beta^4)-L(\beta^2)=-\sum_{n=0}^\infty \frac{\beta^{4n+2}}{1-\beta^{4n+2}}
\]
add up to
\[
   C+D=\sum_{n=0}^\infty \frac{(1+\beta^{2n+1})\beta^{2n+1}-\beta^{4n+1}}{1-\beta^{4n+2}}
\]
which is equal to the left hand side of the above claim. From it, we then gather that
\begin{equation}   \label{SumLambert}
     \sum_{n=1}^\infty \frac{q^n}{1-q^{2n+1}}=\frac{L(\sqrt{q})-2L(q)+L(q^2)}{\sqrt{q}}-\frac{1}{1-q}.
\end{equation}
This identity will now be crucial in the proof of the next theorem. The latter is the other main contribution of this paper.

\begin{Theorem} \label{casep2}
Let $q_0\in(0,1)$ be such that
\begin{equation} \label{sharp}
      \frac{L(\sqrt{q_0})-2L(q_0)+L(q_0^2)}{\sqrt{q_0}}=\frac{2}{1-q_0}
\end{equation}
and let
\[
    \mu_0=\frac{\vartheta_2^2(0,q_0)}{\vartheta_3^2(0,q_0)}.
\] 
Let $\{\mu_n\}_{n=1}^\infty\subset (0,1)$. If $\sup \mu_n< \mu_0$, then $\{g(n\cdot,\mu_n)\}_{n=1}^\infty$ is a Riesz basis of $L^2$.
\end{Theorem}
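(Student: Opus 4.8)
\emph{Proof proposal.} The plan is to run the operator‑theoretic argument used in the proof of Theorem~\ref{fn_basis}, taking advantage of the fact that for $p=2$ the sine Fourier coefficients of $g(\cdot,\mu_n)$ are explicit, so that the analogue of Step~3 there can be computed \emph{exactly} through the Lambert series identity \eqref{SumLambert}, rather than estimated by integration by parts. Let $q_n\in(0,1)$ denote the nome associated with the modulus $\mu_n$, put $\sigma_{2j+1}(n)=(1-q_n)q_n^{\,j}/(1-q_n^{\,2j+1})$ and $\sigma_{2j}(n)=0$, so that by \eqref{eq4} one has $g(n\cdot,\mu_n)=\sum_{k\ \mathrm{odd}}\sigma_k(n)M_ke_n$ with $M_ke_n=e_{kn}$. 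Set $A=\sum_{k\ \mathrm{odd}}M_kB_k$ with $B_k=\operatorname{diag}[\sigma_k(n):n\in\mathbb N]$; then $Ae_n=g(n\cdot,\mu_n)$, and exactly as in Step~1 of Theorem~\ref{fn_basis} it suffices to show that $A$ is a bounded invertible operator, because then $\{g(n\cdot,\mu_n)\}$ is equivalent to the orthonormal basis $\{e_n\}$ and hence a Riesz basis of $L^2$.

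The first observation is that $\sigma_1(n)=1$ for every $n$, so that $B_1=I$ and $\inf_n\sigma_1(n)=1$. Repeating Step~4 of Theorem~\ref{fn_basis} verbatim (a Neumann series for $I+B_1^{-1}\sum_{k\ge 3}M_kB_k$, using that the $M_k$ are isometries), the operator $A$ is bounded and boundedly invertible as soon as
\[
      \sum_{k\ge 3\ \mathrm{odd}}\ \sup_{n\in\mathbb N}\sigma_k(n)<1 .
\]
So the whole theorem reduces to establishing this single inequality under the hypothesis $\sup_n\mu_n<\mu_0$.

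To evaluate the left‑hand side I would first record the elementary monotonicity: for each fixed $j\ge 1$ the map $q\mapsto (1-q)q^{\,j}/(1-q^{\,2j+1})=1/\sum_{i=-j}^{j}q^{i}$ is strictly increasing on $(0,1)$, since $\tfrac{d}{dq}\sum_{i=-j}^{j}q^{i}=\sum_{i=1}^{j}i(q^{i-1}-q^{-i-1})<0$ there. Hence, writing $\bar q:=\sup_n q_n$, one has $\sup_n\sigma_{2j+1}(n)=(1-\bar q)\bar q^{\,j}/(1-\bar q^{\,2j+1})$ for every $j$, and summing over $j\ge 1$ and invoking \eqref{SumLambert} (after restoring the $j=0$ term $\tfrac1{1-q}$) gives
\[
    \sum_{k\ge 3\ \mathrm{odd}}\ \sup_{n\in\mathbb N}\sigma_k(n)=\Phi(\bar q)-1,\qquad \Phi(q):=(1-q)\,\frac{L(\sqrt q)-2L(q)+L(q^{2})}{\sqrt q}=\sum_{j=0}^{\infty}\frac1{\sum_{i=-j}^{j}q^{i}},
\]
so that the inequality to be proved is precisely $\Phi(\bar q)<2$. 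By the same termwise monotonicity, $\Phi$ is strictly increasing on $(0,1)$; moreover $\Phi(0^+)=1$ and $\Phi(q)\to\infty$ as $q\to 1^-$ (monotone convergence, the $j$‑th term tending to $1/(2j+1)$). Therefore \eqref{sharp}, which reads exactly $\Phi(q_0)=2$, has a unique root $q_0\in(0,1)$, and $\Phi(q)<2$ if and only if $q<q_0$. Finally, since the nome is a strictly increasing continuous function of the modulus on $(0,1)$ (see \cite{WW1920}) and $\mu_0$ is by construction the modulus whose nome is $q_0$, the hypothesis $\sup_n\mu_n<\mu_0$ translates into $\bar q=\sup_n q_n<q_0$; hence $\Phi(\bar q)<2$ and the proof is complete.

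The argument is essentially a transcription of the proof of Theorem~\ref{fn_basis}, now \emph{sharp} because the Fourier coefficients are known in closed form; the only points requiring care are the two monotonicity statements — for the individual suprema $\sup_n\sigma_{2j+1}(n)$ and for $\Phi$ itself — together with the modulus–nome dictionary that identifies $\mu_0$. I expect the main obstacle to be bookkeeping rather than depth: one must take the suprema over $n$ \emph{termwise} before applying \eqref{SumLambert}, which holds for a single value of $q$, and it is precisely the monotonicity in $q$ of each coefficient that legitimises evaluating the resulting series at $\bar q=\sup_n q_n$.
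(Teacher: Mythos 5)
Your proposal is correct and follows essentially the same route as the paper: the same decomposition $B=\sum_k M_kB_k$ with $B_1=I$, the reduction to $\sum_{k\geq 3,\ k \text{ odd}}\sup_n\rho_k(n)<1$ via the Neumann-series argument of Step~4 of Theorem~\ref{fn_basis}, the termwise monotonicity \eqref{monoto}, and the Lambert-series identity \eqref{SumLambert} together with \eqref{sharp}. The only difference is cosmetic: you phrase the conclusion through the function $\Phi(\bar q)$ and additionally verify that $q_0$ is the unique root of \eqref{sharp}, which the paper takes as a hypothesis, so no further changes are needed.
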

\begin{proof}
We proceed as in the proof of Theorem~\ref{fn_basis}. Let
\[
    \rho_k(n)=\langle g(\cdot,\mu_n),e_k\rangle.
\]  
That is, the $j$-th Fourier coefficient of the function $g(x,\mu_n)$. Let
\[
     B_k=\operatorname{diag}[\rho_k(n)\,:\,n\in\mathbb{N}].
\]
Then, carrying over the notation from the step~1 of the proof of Theorem~\ref{fn_basis}, we have that
$
       g(nx,\mu_n)=Be_n(x)
$
for all $n\in\mathbb{N}$,
where
\[
     B=\sum_{k=1}^\infty M_kB_k.
\]
According to \eqref{eq4}, $\rho_1(\mu_n)=1$ for all $n\in \mathbb{N}$. Then,
$B_1=I$. The proof of the present theorem reduces to showing that $B$ is an invertible bounded operator acting on $L^2$.

Arguing as in step~4 of the proof of Theorem~\ref{fn_basis}, if
\begin{equation} \label{trianglecasep2}
      \sum_{\substack{k=3 \\ k\equiv_2 1}} \sup_{n\in\mathbb{N}} \rho_k(n)<1,
\end{equation}
then $B$ is invertible. We now confirm this inequality.

Let $\mu_0$ be as in the hypothesis. Then \cite[p.486]{WW1920}, the nome associated to $\mu_0$ is $q_0$ satisfying \eqref{sharp}. For each fixed $j\in \mathbb{N}$, by differentiating with respect to $q$, it is straightforward to see that the function
\begin{equation} \label{monoto}
      q\longmapsto \frac{(1-q)q^j}{1-q^{2j+1}}=\frac{1}{q^{-j}+\cdots+q^{-1}+1+q^1+\cdots+q^j}
\end{equation}
is increasing as $q$ increases.  Then,
\[
     \sup_{n\in\mathbb{N}} \rho_{2j+1}(n)< \frac{(1-q_0)q_0^j}{1-q_0^{2j+1}},
\]
for all $j\in \mathbb{N}$. Now, let
\[
     S=\sum_{j=1}^\infty \frac{(1-q_0)q_0^j}{1-q_0^{2j+1}}.
\]
According to \eqref{SumLambert} and clearing from \eqref{sharp}, $S=1$. Hence, indeed
 \eqref{trianglecasep2} holds true and the theorem is valid.
\end{proof}

This theorem implies that,  whenever $p=2$, the conclusion of Corollary~\ref{corollary1} holds true for $\{\mu_n\}_{n=1}^\infty\subset(0,1)$ such that $\limsup \mu_n< \mu_0$.  Three comments about this are now in place. 

Firstly, note that the condition \eqref{sharp} is optimal in the following precise sense. Due to the monotonicity in $q$ of the terms \eqref{monoto}, the inequality \eqref{trianglecasep2} reverses for $\mu>\mu_0$ and the argument leading to the invertibility of $B$ is no longer valid.

Secondly, the condition \eqref{firstcond} for $p=2$ holds true, only for $q\in(0,0.315323)$ which corresponds to $\mu\in(0,0.996912)$. By contrast, from numerical estimations of the $q$-digamma function and substitution, \eqref{sharp} holds true for $q_0\approx 0.768062$. This corresponds to
$1-\mu_0<10^{-7}$. Therefore, Theorem~\ref{casep2} significantly improves the general Theorem~\ref{fn_basis} for $p=2$. 

Finally, for constant $\{\mu_n\}$, it was reported in \cite{ST2014} that $\mu_n= 0.9909$ was a valid threshold for basis in the case $p=2$. The current findings confirm this claim also in the case of non-constant $\{\mu_n\}$.    
%%%%%%%%%%%%%%%%%%%%%%%%%%%%%%%%%%%%%%%%%%%%%%%%%%%%%%

\end{document}